\begin{document}



\title[The asymmetry of complete and constant width bodies and the Jung constant]{The asymmetry of complete and constant width bodies in general normed spaces and the Jung constant}

\author{Ren\'e Brandenberg and Bernardo Gonz\'alez Merino}
\address{Zentrum Mathematik, Technische Universit\"at M\"unchen, Boltzmannstr. 3, 85747 Garching bei M\"unchen, Germany} \email{brandenb@ma.tum.de} \email{bg.merino@tum.de}


\thanks{The second author was partially supported by MINECO/FEDER project reference MTM2012-34037, Spain.}

\subjclass[2010]{Primary 52A20; Secondary 52A40, 52A21}

\keywords{Asymmetry measures, Banach-Mazur Distance, Constant Width, Completions, Geometric Inequalities, Helly dimension, Jung Constant,
Minkowski Asymmetry, Radii, Perfect Norms}

\begin{abstract}
  In this paper we state a one-to-one connection between the maximal ratio of the circumradius and the diameter of a body (the Jung constant)
  in an arbitrary Minkowski space and the maximal Minkowski asymmetry of the complete bodies within that space.
  This allows to generalize and unify recent results on complete bodies and to derive a necessary condition on the unit ball of the space,
  assuming a given body to be complete. Finally, we state several corollaries, \eg concerning the Helly dimension or the Banach-Mazur distance.
\end{abstract}

\maketitle

\section{Introduction} \label{s:intro}

A bounded set in a Minkowski space (a finite dimensional
real normed space) is called (diametrically) complete
if it cannot be enlarged without increasing its diameter.
In Euclidean spaces and in planar Minkowski spaces the complete sets
are precisely the sets of constant width, while in arbitrary Minkowski spaces
every constant width set is complete, but the converse is not true in general
(\cf~\cite{Eg3}). Actually, the norms of Minkowski spaces in which 
all complete sets are of constant width are called perfect and
characterizing such spaces and norms is still a major open task in
convex geometry (\cf~\cite{Eg3,MSch}) and in functional analysis (%
\cite{CP} shows that norms of general Hilbert spaces are perfect, and \cite{MPP}
studies properties of perfect norms in infinite dimensional Banach
spaces).

There exists a rich variety of asymmetry measures for convex sets
(see \cite[Sect. 6]{Gr} for the possibly most comprehensive
overview) but amongst all, the one receiving most attention is the
\cemph{dred}{Minkowski asymmetry} $s(K)$, which may be expressed as the smallest
dilatation factor needed to cover the origin mirrored set $-K =
\{-x: x \in K\}$ of a set $K$ by a homothetic of $K$ itself. In
mathematical terms:

\begin{equation}
s(K) := \inf\{\rho > 0 : \exists \ c \in \R^n \text{ \st } -(K-c) \subset \rho (K-c)\},
\end{equation}
and if $-(K-c) \subset s(K) (K-c)$ then $c$ is called  the \cemph{dred}{Minkowski center} of $K$.

It is an easy consequence of the Blaschke Selection Theorem (see \eg~\cite{Sch})
that this infimum is attained and the same holds true for all other infima or suprema in the following.

It is well known \cite{Gr} and easy to show that $1 \le s(K) \le n$ with equality in the left inequality iff $K$
is symmetric and equality in the right inequality iff $K$ is a fulldimensional simplex.

For polytopes (presented by their vertices or their facets) the Minkowski asymmetry can be computed via
Linear Programming (see \cite{BeF} or \cite[Lemma 3.5]{BrK2}). Hence, involving this asymmetry
has not only theoretical interest but is useful in computations, too (\cf \cite{BrK}).

The question of finding the most asymmetric sets of constant width
is a well known topic of study
(see, \eg~\cite{GJ} for the Minkowski asymmetry or \cite[Theorem 56]{Egg} for the asymmetry of Besicovitch).
For the Minkowski asymmetry of constant width sets $K$ in Euclidean spaces recently the following
inequality has been derived in \cite{GJ}:

\begin{equation} \label{eq:GJ} 
  s(K) \le \frac{n+\sqrt{2n(n+1)}}{n+2},
\end{equation}

leaving the characterization of the equality case open, which then is solved separately in \cite{GJ3}:
Equality in \eqref{eq:GJ} holds, iff $K$ is a completion of an $n$-dimensional regular simplex.

This result was the starting motivation for the present paper.
In \cite{Sch2} Schneider says \enquote{as a rule, the first step $[\dots]$
consists in optimizing a proof of the inequality to make the identification of the equality
cases as easy as possible}. We believe our work leads into this direction.

Geometric inequalities relating different radii of convex bodies like the circumradius $R(K)$ and the diameter $D(K)$
form a central area of research in convex geometry. The upper bounds for the ratio of the circumradius and the diameter by
Jung \cite{Ju} (for Euclidean space)

\begin{equation} \label{eq:Jung}
  R(K)/D(K) \le \sqrt{\frac{n}{2(n+1)}}
\end{equation}

and Bohnenblust \cite{Bo} (for arbitrary Minkowski spaces)

\begin{equation} \label{eq:Bohnenblust}
  R(K)/D(K) \le \frac{n}{n+1}
\end{equation}


are famous, widely studied and have been object of many improvements and extensions (\eg in \cite{BM,Bo,BrK,BrK2,H,Ju}).

Explicitely mentioning \cite{BF}, in many classical works in
convexity, significant parts are devoted to geometric inequalities among the basic radii (see \cite{Bo,Le,Sa,St21}),
generalizations (\cite{BeHe2,Br2,Gon,H,Per}), and between radii and other functionals (\cite{BeHe1,BHCS,HeHC}).

In recent years both, Jung's inequality as well as the Minkowski asymmetry have drawn renewed attention.
Jung's inequality has been generalized and improved in several ways (see, \eg, \cite{BM,BrK,H}).
In \cite{Sch2} the Minkowski asymmetry has been used as a parameter for improving geometric
inequalities and in \cite{BeF,GuK1} especially as a connection between a geometric inequality and
its restricted version to symmetric sets.
Fundamental results used in the present paper stem from \cite{BrK2}. There, besides others, sharpened versions
of Jung's and Bohnenblust's inequalities involving the Minkowski asymmetry have been derived
(see Proposition \ref{BrK_Bohnenbl} below), which are of major interest here, too.

The main result we obtain is a one-to-one relation between the
maximal Minkowski asymmetry of complete bodies within a given
Minkowski space $\M^n = (\R^n,\norm)$ and the \cemph{dred}{Jung constant} $j(\M^n)$
of the same space, which measures the maximal ratio between the
circumradius and the diameter of arbitrary bodies in that space.

\begin{thm}\label{th:s_link}
For any Minkowski space $\M^n$ and $K$ complete within $\M^n$ it holds
\[s(K) \le \frac{j(\M^n)}{1-j(\M^n)},\]
and for any $K$ complete within $\M^n$ it holds $s(K)=\frac{j(\M^n)}{1-j(\M^n)}$,
iff $K$ is the completion of an $n$-simplex $S$ with circumradius-diameter ratio $j(\M^n)$.
\end{thm}

Defining the \cemph{dred}{asymmetry constant} $s(\M^n)$ of a Minkowski space $\M^n$
by $s(\M^n) := \max \{ s(K) \ :  \ K \text{ complete}\}$, Theorem \ref{th:s_link}
simply says that $s(\M^n) = \frac{j(\M^n)}{1-j(\M^n)}$ or $j(\M^n) = \frac{s(\M^n)}{s(\M^n)+1}$.


This does not only reprove inequality \eqref{eq:GJ} and its equality
case (as to be seen below in a simpler, less technical way), it
also generalizes it to arbitrary Minkowski spaces and therefore
allows to unify several results seemingly not related and thus
generalizing and strengthening them. The base of this unification
lies in the understanding of the direct interplay of two geometric
inequalities belonging to the Jung-constant and completeness, which
are studied 
in \cite{BrK} and \cite{BrK2}.



\bigskip

Before going into details, some necessary notation has to be stated:

For any $A \subset\R^n$ we write $\lin(A)$, $\aff(A)$, and $\conv(A)$ for the \cemph{dred}{linear}, \cemph{dred}{affine},
and \cemph{dred}{convex hull} of $A$, respectively,
and abbreviate by $[x,y] := \conv(\{x,y\})$ the line segment with endpoints $x,y\in\R^n$.

For any $\rho>0$ and $A, B\subset\R^n$ let $A+B :=\{a+b:a\in A,\,b\in B\}$ denote the
\cemph{dred}{Minkowski sum} of $A$ and $B$ and $\rho A := \{\rho a : a\in A\}$ the \cemph{dred}{$\rho$-dilatation} of $A$,
abbreviating $-A:=(-1)A$.

We use $\B$ to denote the unit ball of a Minkowski space $\M^n$ and in case the 
the Minkowski space is a Euclidean space, we write $\E^n=(\R^n,\norm_2)$ and $\B_2$ for the Euclidean unit ball.

A \cemph{dred}{(convex) body} is a set $K \subset \R^n$ that is convex and compact.
Let $\CK^n$ be the family of bodies $K \subset \R^n$, and $\CK^n_0$ its subset formed by
centrally symmetric sets, \ie, when $K=-K$.
All along the paper we refer by $S$ to an $n$-dimensional simplex and by $T$ if the simplex is regular (in the Euclidean sense).

For any two sets, we write $A \subset_t B$ (resp.~$A =_t B$) to denote that there exists a translation vector $c$
such that $A \subset c+B$ (resp. $A=c+B$) and abbreviate by $A \subset^{opt} B$ that $A \subset B$, but
$A \not \subset_t \rho B$ for any $\rho<1$.

Now, the \cemph{dred}{outer radius} or \cemph{dred}{circumradius} $R(K)$ of  $K \in \CK^n$ is
\[ R(K):=\inf\{\rho > 0 : K \subset_t \rho \B\} = \inf_{c \in \R^n} \sup_{x \in K} \norm[x-c]\]
(see \eg~\cite{GK}) and any $c\in \R^n$, \st~$K \subset c + R(K)\B$ is called a \cemph{dred}{circumcenter} of $K$.

Analogously, the \cemph{dred}{inradius} $r(K)$ is defined as $r(K) := \sup\{\rho > 0 : \rho \B \subset_t K\}$
and any $c\in \R^n$, \st~$c + r(K)\B \subset K$ is called an \cemph{dred}{incenter} of $K$.
The \cemph{dred}{diameter} $D(K)$ of $K$ is defined as $D(K) := 2 \sup_{x,y \in K} R([x,y]) = \sup_{x,y \in K} \norm[x-y]$ and
the \cemph{dred}{Jung ratio} as $j(K) := R(K)/D(K)$.
Using the \cemph{dred}{support function} of a convex body $K\in\CK^n$,
$h(K,\,\cdot\,):\R^n \rightarrow \R$, $h(K,u)=\sup_{x \in K} u^Tx$,
the width $w(K)$ of $K$ can be defined as $w(K) = \inf_{u \in \R^n\backslash\{0\}} \frac{h(K-K,u)}{h(\B,u)}
= \inf_{u \in \S^*} h(K-K,u)$, where $\S^*$ denotes the dual unit sphere.

A set $K \in \CK^n$ is said to be of \cemph{dred}{constant width}, if $w(K) = \frac{h(K-K,u)}{h(\B,u)}$,
independently of the choice of $u$. It is well known that $K$ is of constant width, iff $w(K)=D(K)$, or iff $K-K =D(K)\B$
\cite[(A)]{Eg3}.
A set $K$ is called \cemph{dred}{complete} if $D(K \cup \{x\}) > D(K)$ for all $x \notin K$ and a set $K^* \supset K$
is a \cemph{dred}{completion} of $K$ if $K^*$ is complete and $D(K^*) = D(K)$. If it also holds that $R(K^*)=R(K)$,
\ie~$K$ and $K^*$ possess a common circumball, 
then we say $K^*$ is a \cemph{dred}{Scott completion} of $K$.
Indeed, in \cite{Sc81} the existence of Scott completions is proved for the Euclidean case and
with almost the same proof \cite{Vr} shows the same for general Minkowski spaces.
We say a set $K$ is \cemph{dred}{pseudo-complete} if there exists $c\in\R^n$ \st
$c+(D(K)-R(K))\B\subset K\subset c+R(K)\B$ and for any circumcenter $c$ of $K$, the set
$K^+:=\conv\{K\cup(c+(D(K)-R(K))\B)\}$ is called a \cemph{dred}{pseudo-completion} of $K$.

The subsets of $\CK^n$ of all complete bodies and all bodies of constant width are abbreviated by $\Complete$ and $\ConstW$, respectively.

As mentioned in the introduction, it is well-known (see e.g.~\cite{Eg3}) that $\ConstW \subset \Complete$ and
$\ConstW = \Complete$ in the planar case or for $\E^n$.
Unfortunately, this is not the case in general Minkowski spaces when
$n\ge 3$ (as pointed out in \cite{Eg3} and \cite{MaWu}).
For instance, let $\M^n$ be a Minkowski space with indecomposable unit ball $\B$,
\ie~for all $K,L\in\CK^n$ with $\B=K+L$, there exist $\lambda,\mu\ge  0$, \st
$\lambda K =_t \mu L =_t \B$ (\cf~\cite{Sch}). An example for the latter is if $\B$ is the unit crosspolytope.
From the indecomposabilty it follows on the one side that $\ConstW=\{\rho \B + t : \rho \ge 0, t \in \R^n\}$
(as $K\in\ConstW$ iff $K-K=D(K)\B$) and on the other side
it is well known \cite{Eg2,So} and can easily be followed from Theorem
\ref{th:s_link} that $\Complete=\{\rho \B + t : \rho \ge 0, t \in \R^n\}$ holds, iff $\B$ is a
parallelotope (a highly decomposable set).

\bigskip

In \cite[p.~125]{Eg3} (for the Euclidean case) and \cite[Theorem 2]{MSch} (for general Minkowski spaces)
inequalities lower bounding the ratio between the inradius and the diameter of complete sets were derived.
Let $\M^n$ be a Minkowski space and $K\in\Complete$. Then
\begin{equation} \label{eq:MSchEgg}
  \frac{r(K)}{D(K)} \ge \frac{1}{n+1} \qquad \text{and} \qquad
  \frac{r(K)}{D(K)} \ge 1-\sqrt{\frac{n}{2(n+1)}}, \text{ if } \M^n=\E^n.
\end{equation}
Equality holds in the first, if
$K$ is an $n$-simplex and $\B=K-K$,
and in the second inequality, iff $K$ is a completion of a regular $n$-simplex.
In the proof of the general inequality above, Moreno and Schneider used that for any complete set $K$ it holds $r(K)+R(K)=D(K)$.

Involving the Minkowski asymmetry we will not only reprove the inequalities and the equation $r(K)+R(K)=D(K)$ above,
but reveal a deeper relation for these radii for complete sets which also allows to generalize the bound to arbitrary \emph{fixed}
Minkowski spaces:

\begin{thm} \label{thm:rD}
Let $\M^n$ be a Minkowski space and $K\in\Complete$. Then it holds 
\[(s(K)+1)r(K)=r(K)+R(K)=\frac{s(K)+1}{s(K)}R(K)=D(K),\]
and therefore
\[\frac{r(K)}{D(K)} = 1 - j(K) \ge 1 - j(\M^n) \]
with equality, iff $j(K) = j(\M^n)$.
\end{thm}

Moreover, the same relations also allow to formulate conditions on the space $\M^n$
if given a \emph{fixed} $K \in \CK^n$ chosen to be complete:

\begin{thm} \label{th:cp_nec_ball}
  Let $K\in\CK^n$ and 0 a Minkowski center of $K$. Then $K \in \Complete$ implies the following condition on the unit ball $\B$ of $\M^n$:
  \[K-K \subset D(K) \B \subset (s(K)+1)(K\cap(-K)).\]
  If in addition $K$ is a fulldimensional simplex, then either $\ConstW = \{\rho\B + t: \rho \ge 0, t \in \R^n\}$ or
    $K-K=D(K)\B$ (and therefore $K \in \ConstW$).
\end{thm}

\section{Pseudo-completeness and completeness}\label{Pseudo}

The following proposition is taken from \cite[Corollary 6.3]{BrK2}:

\begin{prop} \label{prop:chain}
For any Minkowski space and any $K\in\CK^n$ it holds
\begin{equation*}
\begin{split}
w(K) \leq (s(K)+1)r(K)\leq r(K)+R(K) 
\leq\frac{s(K)+1}{s(K)}R(K)\leq D(K) .
\end{split}
\end{equation*}
\end{prop}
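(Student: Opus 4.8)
The plan is to read the displayed chain as six inequalities and to observe that it contains only three genuinely distinct statements. The two outer bounds are classical: an inball $p+r(K)\B\subseteq K$ gives $2r(K)\B\subseteq K-K$, hence $h(K-K,u)\ge 2r(K)\,h(\B,u)$ for every $u$ and therefore $2r(K)\le w(K)$; a circumball $K\subseteq q+R(K)\B$ gives $\norm[x-y]\le\norm[x-q]+\norm[q-y]\le 2R(K)$ for all $x,y\in K$, i.e. $D(K)\le 2R(K)$. Moreover, after cancellation the two inner inequalities $(1+s(K))r(K)\le r(K)+R(K)$ and $r(K)+R(K)\le\frac{1+s(K)}{s(K)}R(K)$ are the single assertion $s(K)\,r(K)\le R(K)$. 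Thus only $s(K)r(K)\le R(K)$ and the two asymmetric bounds $w(K)\le(1+s(K))r(K)$ and $\frac{1+s(K)}{s(K)}R(K)\le D(K)$ remain.

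For $s(K)r(K)\le R(K)$ I would sandwich an inball inside a circumball. Choosing $p,q$ with $p+r(K)\B\subseteq K\subseteq q+R(K)\B$, I set $\rho:=R(K)/r(K)$ and $c:=-q-\rho p$. Then $-K\subseteq -q+R(K)\B=c+\rho p+\rho\,r(K)\B=c+\rho\big(p+r(K)\B\big)\subseteq c+\rho K$, so \eqref{eq:DefMinAsy} yields $s(K)\le\rho=R(K)/r(K)$.

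For the two asymmetric bounds I would combine the Minkowski asymmetry with the optimality (positive-spanning) conditions for the inradius and the circumradius. Translating $K$ so that its Minkowski centre lies at $0$, \eqref{eq:DefMinAsy} reads $-K\subseteq s(K)K$ and $\frac{1}{s(K)}K\subseteq -K$, which via $h(-K,u)=h(K,-u)$ is the two-sided estimate
\[
\tfrac{1}{s(K)}\,h(K,u)\ \le\ h(K,-u)\ \le\ s(K)\,h(K,u)\qquad(u\in\R^n).
\]
Given an inball $c_i+r(K)\B\subseteq K$, its optimality supplies directions $v_1,\dots,v_k$ with $h(\B,v_j)=1$ and $h(K,v_j)=v_j^Tc_i+r(K)$ such that $\sum_j\mu_j v_j=0$ for some $\mu_j\ge0$ with $\sum_j\mu_j=1$. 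Since $w(K)\le h(K,v_j)+h(K,-v_j)$ for each $j$ (the width is an infimum and $h(\B,v_j)=1$), averaging over $j$ and using $h(K,-v_j)\le s(K)h(K,v_j)$ gives
\[
w(K)\ \le\ \sum_j\mu_j\big(h(K,v_j)+h(K,-v_j)\big)\ \le\ (1+s(K))\sum_j\mu_j\,h(K,v_j)\ =\ (1+s(K))\,r(K),
\]
the last equality because $\sum_j\mu_j v_j=0$ collapses $\sum_j\mu_j\,h(K,v_j)$ to $r(K)$.

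The bound $\frac{1+s(K)}{s(K)}R(K)\le D(K)$ is entirely dual. From a circumball $K\subseteq c_o+R(K)\B$, circumradius optimality provides contact points $x_i\in K$ with $\norm[x_i-c_o]=R(K)$ and functionals $u_i$, $h(\B,u_i)=1$, $u_i^T(x_i-c_o)=R(K)$, with $\sum_i\lambda_i u_i=0$, $\lambda_i\ge0$, $\sum_i\lambda_i=1$; then $h(K,u_i)=u_i^Tc_o+R(K)$ (as $x_i$ maximises $u_i^Tx$ over $K$), and $h(K,u_i)+h(K,-u_i)=h(K-K,u_i)\le D(K)$ for each $i$. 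Averaging and using the reverse estimate $h(K,-u_i)\ge\frac{1}{s(K)}h(K,u_i)$,
\[
D(K)\ \ge\ \sum_i\lambda_i\big(h(K,u_i)+h(K,-u_i)\big)\ \ge\ \Big(1+\tfrac{1}{s(K)}\Big)\sum_i\lambda_i\,h(K,u_i)\ =\ \tfrac{1+s(K)}{s(K)}\,R(K),
\]
the final equality again using $\sum_i\lambda_i u_i=0$. The hard part will be the technical core shared by these last two steps: establishing the positive-spanning optimality conditions for $r(K)$ and $R(K)$ in a general normed space, where Euclidean contact vectors must be replaced by supporting functionals (subgradients of the norm) and the relevant non-degeneracy checked. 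Once these are available the two averaging arguments are routine and, pleasingly, exactly dual to one another.
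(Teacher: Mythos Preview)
The paper does not supply its own proof of Proposition~\ref{prop:chain}; the result is simply quoted from \cite[Corollary~6.3]{BrK2} and used as a black box. There is therefore no in-paper argument to compare your attempt against.

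Your proof is nonetheless correct. The reduction of the six links to three genuine statements --- the classical $2r\le w$ and $D\le 2R$, the single inequality $s(K)\,r(K)\le R(K)$ covering both middle links, and the dual pair $w\le(1+s)r$ and $\tfrac{1+s}{s}R\le D$ --- is exactly the right organisation. The sandwich $-K\subseteq -q+R(K)\B=c+\rho\bigl(p+r(K)\B\bigr)\subseteq c+\rho K$ is the standard one-line proof of $s(K)\le R(K)/r(K)$, and the two averaging arguments are valid as written once the optimality conditions you invoke are in hand: at an optimal incentre (respectively circumcentre) there exist finitely many tight supporting functionals $v_j$ (respectively $u_i$) normalised by $h(\B,\cdot)=1$, with $0$ in their convex hull. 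These are precisely the KKT conditions for the underlying semi-infinite linear programs (equivalently, the subdifferential characterisation of the max-type functionals $c\mapsto\max_{x\in K}\norm[x-c]$ and its dual), and they hold in any Minkowski space; in the form you need they are stated and used, for instance, in the very source \cite{BrK2} from which the proposition is taken (see also \cite{BrK,GK}). So the ``hard part'' you flag is available in the literature, and with it your argument is complete. The duality you observe between the $w$--$r$ and the $R$--$D$ steps is also the structure of the original proof in \cite{BrK2}.
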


Proposition \ref{prop:chain} also allows an immeadiate corollary,
summarizing two inequalities we need later.

\begin{cor}\label{BrK_Bohnenbl}
For any Minkowski space and any $K\in\CK^n$ it holds
\begin{equation} \label{eq:JungBrK}
j(K) \le \frac{s(K)}{s(K)+1},
\end{equation}
and
\begin{equation}\label{eq:Dr}
\frac{r(K)}{D(K)}\leq\frac{1}{s(K)+1},
\end{equation}
\end{cor}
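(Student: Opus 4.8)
The plan is to treat the statement as three sub-claims: the two inequalities \eqref{eq:JungBrK} and \eqref{eq:Dr}, and the sharpness assertion. The two inequalities require essentially no work beyond Proposition \ref{prop:chain}: the chain there contains the link $\frac{1+s(K)}{s(K)}R(K)\le D(K)$, which rearranges immediately to $R(K)/D(K)\le s(K)/(s(K)+1)$, i.e.\ \eqref{eq:JungBrK}; and concatenating $(1+s(K))r(K)\le r(K)+R(K)\le\frac{1+s(K)}{s(K)}R(K)\le D(K)$ gives $(1+s(K))r(K)\le D(K)$, i.e.\ \eqref{eq:Dr}. So I would dispatch these in one line each.

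The substance is the sharpness. Given a prescribed value $s_0$, I would first note that since $s$ is affinely invariant, continuous, equal to $1$ on symmetric bodies and to $n$ on simplices, the intermediate value theorem along a path such as $(1-\lambda)\B_2+\lambda S$ produces, for every $s_0\in[1,n]$, a convex body $K$ with $s(K)=s_0$; translating, I may assume $0$ is its Minkowski centre. I would then equip $\R^n$ with the norm whose unit ball is the difference body $\B:=K-K$. Since $K-K=\B$, the body $K$ is of constant width, so $D(K)=w(K)$, and after rescaling $\B$ I normalize $D(K)=1$.

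The key computation is locating the radii of $K$ in this norm. Writing $f=h(K,\cdot)$ and $g(u)=h(K,-u)$, one has $h(\B,u)=f(u)+g(u)$, and for a Minkowski-centred body $s_0=\sup_u g(u)/f(u)$. Evaluating the inradius at centre $c=0$ gives $r(K)\ge\inf_u\frac{f(u)}{f(u)+g(u)}=\frac{1}{1+\sup_u g(u)/f(u)}=\frac{1}{1+s_0}$, while \eqref{eq:Dr} supplies the matching upper bound $r(K)\le\frac{1}{1+s_0}$; hence $r(K)=\frac{1}{1+s_0}$ and \eqref{eq:Dr} is tight. Feeding this into the chain, the link $s(K)r(K)\le R(K)$ forces $R(K)\ge\frac{s_0}{1+s_0}$, while \eqref{eq:JungBrK} caps $R(K)$ at the same value; thus $R(K)=\frac{s_0}{1+s_0}$ and \eqref{eq:JungBrK} is tight as well, simultaneously, for every prescribed $s_0$.

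The main obstacle I anticipate is precisely this radii computation in the norm $\B=K-K$: one must justify the support-function descriptions of the circum- and inradius, check that $0\in\operatorname{int}K$ so that $f(u)>0$ and $\sup_u g/f$ is finite and equals $s_0$, and verify that the one-sided bound obtained from the single centre $c=0$ really closes against the complementary bound coming from Proposition \ref{prop:chain}. The elegant point, once this is set up, is that the chain itself does half the work: the explicit $c=0$ estimate pins the inradius from below and the diameter from above, and the chain inequalities then pin everything from the other side, so the whole chain collapses to equalities.
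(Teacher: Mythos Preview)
Your derivation of \eqref{eq:JungBrK} and \eqref{eq:Dr} from the chain in Proposition \ref{prop:chain} is exactly how the paper obtains them (it calls the corollary ``immediate'' and gives no further argument).

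For the sharpness the paper offers no proof in the text, simply deferring to \cite{BrK2}. Your explicit construction --- take any Minkowski-centred $K$ with $s(K)=s_0$, set $\B=K-K$ so that $K$ has constant width, compute the inball at the centre $0$ via support functions, and then let the chain pinch $R(K)$ from both sides --- is correct and in fact anticipates the paper's later Theorem \ref{th:s_link}, which shows $j(K)=s(K)/(s(K)+1)$ for every complete $K$. The difference is that Theorem \ref{th:s_link} reaches this by quoting the identity $D(K)=r(K)+R(K)$ for complete sets from \cite{MSch}, whereas your argument is self-contained: you get $r(K)\ge 1/(1+s_0)$ directly from $\sup_u g(u)/f(u)=s_0$ and then use the already-proved inequalities to close the sandwich. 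Two minor clean-ups: with $\B=K-K$ one has $D(K)=1$ automatically (any $x-y\in K-K=\B$), so the rescaling step is superfluous; and the identity $\sup_u g(u)/f(u)=s_0$ does need $0\in\operatorname{int}K$, which follows since $K-K\subset(1+s_0)K$ forces $0$ into the interior --- worth stating explicitly, as you yourself flagged.
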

In particular
\eqref{eq:JungBrK} sharpens \eqref{eq:Bohnenblust},
(but has been restricted to Minkowski space
from \cite[Theorem 4.1]{BrK2}), while \eqref{eq:Dr} generalizes and
sharpens an inequality for Euclidean space of Alexander (see
\cite{Alex} and \cf \cite[Corollary 6.4]{BrK2}).

Particularizing in $\E^n$, \eqref{eq:JungBrK}
also leads to a sharpening of
Jung's inequality \eqref{eq:Jung} (\cf \cite[Corollary 5.1]{BrK2}):

\begin{prop} \label{BrK_Jung}
 In Euclidean spaces $\E^n$ it holds
\begin{equation*} 
  j(K) \le \min\left\{\sqrt{\frac{n}{2(n+1)}},\frac{s(K)}{s(K)+1}\right\}
\end{equation*}
for any $K \in \CK^n$.
\end{prop}
One should remark that the inequality above is fulfilled with equality for both values inside the minimum
\emph{at the same time} iff $\conv\{T\cup \frac{1}{s(T^*)}(-T)\}\subset K\subset T^*$.

Directly contained within the inequality chain in Proposition
\ref{prop:chain} is the inequality
 \begin{equation}\label{eq:Rr<D} R(K)+r(K) \le D(K),\end{equation}
 which we want to investigate closer in the remainder of this section.
It has been proven in \cite{Sa} for the Euclidean case and $n=2$ and
in \cite{Br} for arbitrary $n$. Eventhough it follows from Proposition \ref{prop:chain}
for general Minkowski spaces we want to present an easy direct proof
from which we extract  the fact that any set attaining equality in \eqref{eq:Rr<D} must have
some common in- and circumcenters.

\begin{lem}\label{lem:DRrs}
 For any Minkowski space $\M^n$ and any $K \in \CK^n$ it holds
 $R(K)+r(K) \le D(K)$
 and if $R(K)+r(K)=D(K)$ any incenter of $K$ is also a circumcenter.
\end{lem}

\begin{proof}
  For showing the inequality, assume \Wlog that 0 is an incenter of $K$.
  Since there exists a point in $K$ which is at least at distance $R(K)$
  from the incenter, there must be some $p \in \bd(\B)$ and $\rho \ge R(K)$,
  \st $\rho p \in K$. But, since
  $r(K)\B \subset K$, we also have $r(K)(-p) \in K$ and therefore
  $D(K) \ge \norm[\rho p-(-r(K)p)] \ge R(K)+r(K)$.

  The concentricity statement then follows directly from the necessity of
  $\norm[\rho p-(-r(K)p)] = R(K)+r(K)$, which means there is no point at a distance bigger than
  $R(K)$ from any incenter.
\end{proof}

It was shown in \cite{MSch} (by combining results from \cite{Eg3}
and \cite{Sall}) that \eqref{eq:Rr<D} holds with equality for any $K
\in \Complete$. The converse is in general not true, but the equality case of
\eqref{eq:Rr<D} plays a major role in understanding completions and
inequalities concerning them.
The following lemma now completely characterizes this equality case 
(partially it is a direct consequence of Proposition \ref{prop:chain}).

\begin{lem}\label{lem:pseudo}
Let $\M^n$ be a Minkowski space and $K\in\CK^n$.
Then the following are equivalent:
\begin{enumerate}[(i)]
\item\label{ps1} $D(K)=r(K)+R(K)$,
\item\label{ps3} $K$ is pseudo-complete with respect to $\B$,
\item\label{ps4} $D(K)=(s(K)+1)r(K)$ (equality case in Theorem \ref{thm:rD}), and
\item\label{ps2} for every incenter $c$ of $K$ it holds $K-K\subset D(K)\B\subset(s(K)+1)((K-c)\cap(-(K-c)))$.
\end{enumerate}
\end{lem}
\begin{proof}[Proof of Lemma \ref{lem:pseudo}]
In view of Theorem \ref{thm:rD} either \eqref{ps1} or \eqref{ps4} implies
$R(K)=s(K)r(K)$ and therefore the other.

Since $D(K)=R(K-K)$ we have that \eqref{ps4} (or \eqref{ps1}) implies:
\[K-K\subset R(K-K)\B=D(K)\B=(s(K)+1)r(K)\B\subseteq (s(K)+1)(K-c).\]
However, since $K-K$ is symmetric, we then also have $K-K\subset(s(K)+1)(-(K-c))$ and therefore \eqref{ps2}.

Now, observe that \eqref{ps2} implies $D(K)\B\subset (s(K)+1)(K-c)$ and therefore
$r(K)\geq D(K)/(s(K)+1)$. Together with Proposition \ref{prop:chain} we obtain \eqref{ps4}.

Next we show \eqref{ps1} implies \eqref{ps3}:
From Lemma \ref{lem:DRrs} we know that \eqref{ps1} means that $K$ has a common in- and circumcenter $c$,
and 
therefore $c+(D(K)-R(K))\B\subset K \subset c+ R(K)\B$,
which means $K$ is pseudo-complete.

Finally, \eqref{ps3} together with \eqref{eq:Rr<D} directly implies \eqref{ps1}.


\end{proof}

\begin{rem} \label{rem:pc-c}
  \begin{enumerate}[a)]
  \item \label{rem:pc-c-a} Lemma \ref{lem:pseudo} implies that $K$ is pseudo-complete iff
    $K$ attains equality in \eqref{eq:Rr<D}. Also taking Proposition \ref{prop:chain} into account we see that
    pseudo-completeness also implies
    equality in \eqref{eq:JungBrK}, but the converse is in general not true (as \eg
    $\conv\{T\cup \frac{1}{s(T^*)}(-T)\}$ is not pseudo-complete in Euclidean space, but
    fulfills equality in Proposition \ref{BrK_Jung}).
  \item \label{rem:pc-c-b} As said above, if $K$ is complete it fulfills $D(K)=r(K)+R(K)$, and thus every complete $K$ is pseudo-complete.
  \end{enumerate}
\end{rem}

\medskip

The following Lemma collects some facts for pseudo-completions and completions.

\begin{lem} \label{lem:pseudo2}
  \begin{enumerate}[a)]
  \item If $K \in \CK^n$ and $K^+$ a pseudo-completion of $K$, then $R(K^+) = R(K)$ and $D(K^+)=D(K)$.
  \item If $K$ is pseudo-complete and $K^*$ a completion of $K$, then $R(K^*)=R(K)$, $r(K^*) = r(K)$ and $s(K^*)=s(K)$.
  \end{enumerate}
\end{lem}

\begin{proof}
  \begin{enumerate}[a)]
  \item   Without loss of generality we may assume that 0 is a circumcenter of $K$ and  $K^+=\conv\{(D(K)-R(K))\B\cup K\}$.
    Obviously $K^+ \subset R(K)\B$, which means by the monotonicity of $R$ that $R(K^+)=R(K)$. Moreover, since $K^+$ is pseudo-complete,
    we know from Lemma \ref{lem:pseudo} that $D(K^+) = r(K^+)+R(K^+)  = D(K) - R(K) + R(K^+) = D(K)$.
  \item Since both sets are pseudo-complete, we have $r(K)+R(K)=D(K) =D(K^*)=r(K^*)+R(K^*)$, which means because of set monotonicity that
    $R(K^*)=R(K)$ and $r(K^*) = r(K)$. Finally, $s(K^*)=s(K)$ follows from Lemma \ref{lem:pseudo} \eqref{ps4}.
  \end{enumerate}
\end{proof}

As an easy consequence we obtain a reproof of the existence of Scott completions for general Minkowski spaces only relying on the
existence of arbitrary completions:

\begin{cor} \label{cor:SVcompletion}
Let $\M^n$ be a Minkowski space with unit ball $\B$ and $K\in\CK^n$. Then there exists a
Scott completion $K^*$ of $K$.
\end{cor}

\begin{proof}
Let $K^+$ be any pseudo-completion of $K$ and $(K^+)^*$ be
any completion of $K^+$, which by \cite{Gro} always exists and obviously is a completion of $K$ too.
Now, from taking both parts of Lemma \ref{lem:pseudo2} together, we obtain $R(K)=R(K^+)=R((K^+)^*)$.
\end{proof}

Even though every complete body $K$ is pseudo-complete, the converse is in general
not true. Because of this the case in which $K$ is a pseudo-complete simplex deserves special attention.

\begin{lem}\label{lem:-SinS}
Any simplex $S$ has a unique Minkowski center $c$ and  $S-S$ touches  all
facets of $(n+1)((S-c)\cap(-(S-c)))$ with a full facet of its own.
\end{lem}

\begin{proof}
The uniqueness of the  Minkowski center $c$ of any simplex follows directly from the optimality conditions for containment
under homothetics \cite[Theorem 2.3]{BrK}.
Now \Wlog, let $S$ be regular and $c=0$. Moreover, suppose that $S=\conv\{p^1,\dots,p^{n+1}\}$ and let $F_i=\conv\{p^k:k\neq i\}$
denote the opposing facet to $p^i$ in $S$.

Then for any fixed $j \in [n+1]$ it holds $F_j-p^j$ is a facet of $S-S$ with vertices $p^i-p^j$, $i \in [n+1]$, $i \neq j$. Now,
$-p^j = \sum_{k \ne j} p^k \in nF_j$ and thus $p^i-p^j \in (n+1)F_j$, $i \in [n+1], i \neq j$,
and since $p^i-p^j \in (n+1)(-F_i) \subset (n+1)(-S)$, we obtain $F_j-p^j \subset (n+1)(F_j \cap(-S))$
\end{proof}

\begin{cor}\label{c:simplexCom}
Let $\M^n$ be a Minkowski space and $S$ an $n$-simplex. Then the following are equivalent:
\begin{enumerate}[(i)]
\item\label{com3} $S$ is complete with respect to $\B$.
\item\label{com1} $D(S)=r(S)+R(S)$.
\item\label{com2} $S-S\subset D(S)\B\subset(n+1)((S-c)\cap(-(S-c)))$, where $c$ is the (unique) incenter of $S$.
\item\label{com4} $j(S)=n/(n+1)$ (which means equality in \eqref{eq:Bohnenblust}).
\end{enumerate}
\end{cor}

\begin{proof}
It follows from Lemma \ref{lem:pseudo} that \eqref{com1} and \eqref{com2} are equivalent and that \eqref{com3} implies either of them.
That \eqref{com2} characterizes the equality case in \eqref{eq:Bohnenblust} (and thus is equivalent to \eqref{com4}) is shown in \cite{Le}.

In order to show \eqref{com2} implies \eqref{com3}, we may assume \Wlog that $c=0$. Now, let $x \not\in S$
and denote the facet of $S$ separating $x$ from $S$ by $F_i$ and the vertex of $S$ not in $F_i$ by $p^i$. Then
 $p^i \in -nF_i \subset \bd(-nS)$ and $x \not\in S$. Thus $p^i -x \not\in -(n+1)S \supset D(S)\B$. Hence
 $D(S \cup \{x\}) > D(S)$, proving that $S$ must be complete.
\end{proof}

Putting all together we are ready to prove Theorem \ref{th:cp_nec_ball}.

\begin{proof}[Proof of Theorem \ref{th:cp_nec_ball}]
  The statement about the unit balls belonging to general complete $K$ follows directly
  from Lemma \ref{lem:pseudo} in conjunction with Remark \ref{rem:pc-c} \eqref{rem:pc-c-b}.

  Now, consider the part about the case when $K$ is an $n$-simplex. 
  Since $s(K)=n$ we have $K-K \subset D(K)\B \subset (n+1) (K \cap(-K))$. Assuming $K' \in \ConstW$ with $D(K')=D(K)$ it
  follows $K'-K'=D(K)\B \supset K-K$ and if
  $K'-K' \neq K-K$ there exist $x \in K' \setminus K$. Using the same argument as in the step from \eqref{com2} to\eqref{com3}
  in Corollary \ref{c:simplexCom}, we see that
  $p^i-x \in K'-K' \setminus (n+1)(K \cap (-K))$, contradicting $K'-K' = D(K)\B \subset (n+1)(K \cap (-K))$.
  Thus $K-K = D(K)\B$ or $\ConstW = \{\rho \B +t: \rho \ge 0, t\in \R^n\}$.
\end{proof}

\begin{rem}
  \begin{enumerate}[a)]
  \item While $\ConstW = \{\rho \B +t : \rho \ge 0, t\in \R^n\}$ obviously implies $\max_{K \in \ConstW} s(K) = 1$,
    it follows from the last lemma in \cite{Eg2} that at least for $n=3$ in case of a simplex being of constant width,
    there exists another simplex $S'$, \st $S'$ is complete with respect to $\B$
    but not of constant width.
    Thus in both cases with a complete simplex, it holds $\Complete \neq \ConstW$ (at least) in 3-space,
    but while in the first $\max_{K \in \ConstW} s(K) = 1$ in the latter $\max_{K \in \ConstW} s(K) = s(\M^n)=n$.
  \item One should recognize that the existence of a simplex $S \subset \R^n$ such that Property \eqref{com2} of Corollary \ref{c:simplexCom}
    is fulfilled with $\B$ being a regular crosspolytope
    is by \cite{Dol} equivalent to the existence of an $(n+1)$-dimensional Hadamard matrix.
  \end{enumerate}
\end{rem}





\begin{lem}\label{l:JungComplete}
Let $\M^n$ be a Minkowski-space. Then
\begin{equation} \label{eq:CompleteJung}
j(\M^n)=\max_{K\in\Complete} j(K)
\end{equation}
and for any $K \in \CK^n$ with $j(K)=j(\M^n)$ and any completion $K^*$ of $K$
there exists an $n$-simplex $S \subset K$ 
\st $D(S)=D(K)=D(K^*)$ and $R(S)=R(K)=R(K^*)$.
\end{lem}

\begin{proof}
Let $K\in\CK^n$  with $j(K)=j(\M^n)$ and $K^*$ an arbitrary completion of $K$. 
We obtain $D(K^*)=D(K)$ and $R(K^*) \ge R(K)$ and thus $j(K^*) \ge  j(K) = j(\M^n)$, which means $j(K^*) = j(\M^n)$
and therefore $R(K^*) = R(K)$.

Now, due to Helly's theorem there always exists an $n$-simplex $S \subset K$,
\st $R(S)=R(K)$ (\cf~\ref{lem:core}) and surely it holds $D(S) \le D(K)$ and therefore $j(S) \ge  j(K) = j(\M^n)$.
Again it directly follows $j(S)=j(\M^n)$ and $D(S)=D(K)$.
\end{proof}

Observe that we implicitely showed in the proof of Lemma \ref{l:JungComplete} that every completion of a set $K$
attaining equality in \eqref{eq:CompleteJung} is a Scott completion.


Lemma \ref{l:JungComplete} implies that there always exists some simplex $S$ with
$j(S)=j(\M^n)$. On the other hand $j(K) \ge 1/2$ for all $K$, with equality if $K=-K$.
Hence $j$ may be understood a kind of asymmetry measure, too -- one that depends on
the Minkowski space it is measured in. In this it differs from the Minkowsky asymmetry, but
measuring asymmetry with respect to the way distances are measured within the space
makes a certain sense to us.

\bigskip

Taking together the above results we may now prove Theorem \ref{th:s_link} and Theorem \ref{thm:rD}.

\begin{proof}[Proof of Theorems \ref{th:s_link} and \ref{thm:rD}]

It follows from Remark \ref{rem:pc-c} \eqref{rem:pc-c-a} that $j(K)=s(K)/(s(K)+1)$ for all $K\in\Complete$.
Now, maximizing over $K\in\Complete$ on both sides, we obtain from \eqref{eq:CompleteJung} that $j(\M^n)=s(\M^n)/(s(\M^n)+1)$.

The characterization of the equality case then is a direct corollary of that in Lemma \ref{l:JungComplete}.

Every $K\in \Complete$ implies the conditions in Lemma
\ref{lem:pseudo} and thus equality in all but the first inequality
of Proposition \ref{prop:chain}. The second follows from
$D(K)=(s(K)+1)r(K)$ and $s(K)\leq s(\M^n)$, with equality iff
$s(K)=s(\M^n)$, which by $j(K)=s(K)/(s(K)+1)$ and
$j(\M^n)=s(\M^n)/(s(\M^n)+1)$ occurs iff $j(K)=j(\M^n)$.

\end{proof}

\begin{rem} \label{rem:equal}
Surely Theorem \ref{th:s_link} holds valid if we enlarge the class of complete
to pseudo-complete sets. Indeed the proof only relies in the equality condition
of \eqref{eq:Rr<D}.

Moreover, an immediate consequence of Theorem \ref{th:s_link} is that the Jung
ratio of a (pseudo-) complete set only depends on the Minkowski space in the
way that the set has to be  (pseudo-) complete, but not in its value. This
means, if a set $K$ is  (pseudo-) complete in two different spaces
the Jung ratio $j(K)$ stays constant. (Remember that we may interpret Condition \eqref{ps2} of
\ref{lem:pseudo} as a characterization of the unit ball of the spaces in which $K$ is pseudo-complete.)
\end{rem}


\bigskip

Applying Theorem \ref{th:s_link} we obtain some direct but important corollaries.

Corollary \ref{cor:GJ_ExtJ} does not only imply \eqref{eq:GJ}, but also answers the question in \cite{GJ}
(which was answered separately in \cite{GJ3}) about a characterization of the \enquote{only if case} (and so does Theorem
\ref{th:s_link} even for general Minkowski spaces). Recall that if $n=2$ then $T^*$
(a completion of the regular simplex) equals
the \emph{Reuleaux triangle} and in case that $n=3$ has as particular examples the
two \emph{Meissner bodies} (see \cite{BF}
for a detailed construction and basic properties).


\begin{cor}\label{cor:GJ_ExtJ}
In Euclidean space it holds
\[ s(\E^n)=\frac{j(\E^n)}{1-j(\E^n)}=\frac{n+\sqrt{2n(n+1)}}{n+2}, \]
and for $K \in \ConstW$ it holds $s(K)=s(\E^n)$ if and only if $K$ is
the completion $T^*$ of a regular simplex.
\end{cor}

\begin{proof}
The first statement follows directly from combining Jung's inequality
\eqref{eq:Jung} and Theorem \ref{th:s_link}, the second afterwards
from combining Proposition \ref{BrK_Jung}, Theorem \ref{th:s_link} and Lemma \ref{l:JungComplete}.
\end{proof}

\bigskip

Similar studies have been recently done  in \cite{Ji} for sets of revolution (around an axis of symmetry).
As corollaries of \eqref{eq:GJ} and Theorem \ref{th:s_link} we provide simplified proofs for \cite[Theorem 1 and Theorem 2]{Ji}.


\begin{cor}\label{p:Ji}
  Let $K \in \CK^n$ be a body of revolution in $\E^n$. Then
  \[
  s(K)\le 2\quad\text{and}\quad s(K) \le \frac{\sqrt{3}+1}{2},\text{ if } K\in\Complete,
  \]
  with equality in the first inequality iff $K$ is the body of revolution of an isosceles
  triangle and in the second iff $K$ is the body of revolution of a Reuleaux triangle.
\end{cor}

\begin{proof}
Let $e^i$ be the $i$-$th$ unit vector, $L=\lin(\{e^1\})$, $F=\lin(\{e^1,e^2\})$, and assume $K$ to be a body of revolution
around $L$. Furthermore by $K|F$ we denote the orthogonal projection of $K$ onto $F$.

First observe that in general for any body of revolution $K$ around $L$ it obviously holds
that $-K \subset_t \rho K$ iff  $-K|F \subset_t \rho K|F$ for all $\rho \ge 0$. Thus $s(K) = s(K|F)$.

Now $K|F$ is always a 2-dimensional set and thus
$s(K|F) \le 2$, with equality iff $K|F$ is a triangle (axially symmetric with respect to $L$ because of $K$ being a body of revolution).
Hence $s(K|F)=2$ holds iff $K|F$ is an isosceles triangle possessing one vertex in $L$ and the
other two symmetric with respect to $L$.

Finally, assuming that $K$ is of constant width also  $K|F$ is of constant width and
it follows from Corollary \ref{cor:GJ_ExtJ}
\[s(K|F) \le s(\E^2) = \frac{\sqrt{3}+1}{2}\]
and that equality holds iff $K|F$ is the completion of a regular simplex -- \ie it must be a Reuleaux triangle --
symmetric with respect to $L$.
\end{proof}

\begin{rem}
One may easily extend Corollary \ref{p:Ji} by assuming that $K$ is a
body of revolution around an $i$-dimensional subspace $L$. More formally let $L^\bot$ denote the orthogonal
subspace of $L$, then $K = \{x|L + \norm[x-x|L]_2 (\B_2 \cap L^\bot) : x \in K\}$.
In this case $s(K) \le i+1$, with equality for any set of revolution of an
$(i+1)$-dimensional simplex symmetric with respect to $L$.
Analogously, if $K\in\ConstW$, again the bound may be improved using Corollary \ref{cor:GJ_ExtJ}
to $s(K)\le s(\E^{i+1})=\frac{i+1+\sqrt{2(i+1)(i+2)}}{i+3}$, with equality for
the body of revolution around $L$ of any completion $K^*$ of an $(i+1)$-dimensional regular simplex
with $i$ vertices in $L$ and the other two symmetric with respect to $L$, \st $K^*$ keeps symmetric around $L$
(which can be ensured by taking $\frac12(K_1^*+K_2^*)$, if $K_1^*$ is an arbitrary completion of the regular simplex within
any $(i+1)$-space containing $L$ and $K_2^*$ is its mirrored with respect to $L$.
\end{rem}

\bigskip

In a Helly-type theorem there is usually a family of objects $\CF$, a property $\Pi$ and a number $h \in\N$, \st
if every subfamily $\CS$ of $\CF$ with $|\CS| = h$ possesses property $\Pi$, then $\CF$ has property $\Pi$.
The classical theorem of Helly deals with the case when $\CF$ is a finite
family of convex sets in $\R^n$, $h = n + 1$, and the property $\Pi$ is that the sets should have a non-empty
intersection. Helly-type theorems play a central role in convex geometry (see, \eg, \cite{BKP,BKP2,DGK} for
an overview on this topic). 

A remarkable special case is when $\CF$ only consists of translates of one fixed $K\in\CK^n$ \cite{SzN}.
The \cemph{dred}{Helly dimension} $\him(K)$ of a set $K\in\CK^n$ is defined as
the smallest positive integer number $k$, \st whenever we consider a family of indices $I\neq\emptyset$ with
$\bigcap_{i\in J}(x_i+C)\neq\emptyset$, for any $J\subset I$, $|J|\leq k+1$, $x_i\in\R^n$ and $i\in I$,
it already follows $\bigcap_{i\in I}(x_i+C)\neq\emptyset$. For a Minkowski space $\M^n$, we also use
$\him(\M^n):=\him(\B)$.

The following lemma generalizes \cite[Lemma 2.2]{BrK} (with almost the same proof).

\begin{lem} \label{lem:core}
  Let $\him(\M^n)=k \in [n]$. Then $R(K)=\max\{R(L): L \subset K, |L| \le k+1\}$ for all $K \in \CK^n$.
  Furthermore, if $\dim(K) \ge k$, then there always exists a $k$-simplex $S \subset K$, \st
  $R(S)=R(K)$.
\end{lem}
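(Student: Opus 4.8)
The plan is to prove the two assertions separately: the first by rewriting circumradius containment as an intersection of balls and feeding it into the defining property of the Helly dimension, the second by extracting a minimal witness from the first part and padding it up to a full simplex.

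For the equality $R(K)=\max\{R(L):L\subset K,\,|L|\le k+1\}$ I would exploit the symmetry $\B=-\B$. For $x\in K$ one has $x\in c+\rho\B$ iff $c\in x+\rho\B$, so
\[
\{c\in\R^n: K\subset c+\rho\B\}=\bigcap_{x\in K}(x+\rho\B),
\]
and hence $R(K)\le\rho$ iff this intersection is nonempty (the infimum in the definition of $R(K)$ being attained, as $\rho\B$ is compact). Since the Helly dimension is invariant under scaling, $\him(\rho\B)=\him(\B)=k$, so by the very definition of $\him$ the full intersection $\bigcap_{x\in K}(x+\rho\B)$ is nonempty as soon as every subintersection over an index set $J\subset K$ with $|J|\le k+1$ is nonempty; and $\bigcap_{x\in J}(x+\rho\B)\neq\emptyset$ says exactly $R(J)\le\rho$. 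Thus $R(K)\le\rho$ iff $R(L)\le\rho$ for all $L\subset K$ with $|L|\le k+1$, which gives $R(K)=\sup\{R(L):L\subset K,\,|L|\le k+1\}$. Viewing $(x_0,\dots,x_k)\mapsto R(\conv\{x_0,\dots,x_k\})$ as a continuous function on the compact set $K^{k+1}$ shows this supremum is attained, so it is a maximum.

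For the second statement I would use the first part to pick a witness $L\subset K$ with $|L|\le k+1$ and $R(L)=R(K)$, and among all such witnesses take one of minimal cardinality; such an $L$ is irredundant, i.e. $R(L\setminus\{x\})<R(K)$ for every $x\in L$. The key claim is that an irredundant witness is affinely independent, so that $L$ spans an $(|L|-1)$-simplex with $|L|-1\le k$. Granting this, if $|L|=k+1$ then $L$ already is a $k$-simplex and we are done; otherwise $\dim\operatorname{aff}(L)=|L|-1<k\le\dim(K)$, so there is a point $x\in K\setminus\operatorname{aff}(L)$, and $L\cup\{x\}$ is again affinely independent with $R(K)=R(L)\le R(L\cup\{x\})\le R(K)$, hence still a witness. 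Iterating this padding until $k+1$ affinely independent points are collected produces the desired $k$-simplex $S\subset K$ with $R(S)=R(K)$.

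The hard part is the claim that an irredundant witness is affinely independent. In a Euclidean space this is classical: if the points lay in a proper affine subspace $A$, one may project the circumcenter orthogonally onto $A$ without increasing any distance, so the circumradius is already realized inside $A$, and Helly's theorem within $A$ (a Euclidean space of dimension $\dim(A)<|L|-1$) then produces a strictly smaller witness, contradicting minimality. In a general Minkowski space the obstruction is precisely that a projection of the circumcenter into $\operatorname{aff}(L)$ need no longer decrease distances, so the circumcenter may genuinely lie off the affine hull. I would handle this by a Radon-partition argument: if $L$ were affinely dependent, split it as $L=L_1\sqcup L_2$ with $\conv(L_1)\cap\conv(L_2)\neq\emptyset$ and argue, using the convexity of $\norm$ together with the tightness of the optimal circumball, that one of the two groups already forces radius $R(K)$, so that some point can be dropped, contradicting irredundancy. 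This is the step that follows \cite{BrK} most closely and where the care is needed; the remaining parts are purely formal.
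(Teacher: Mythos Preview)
Your proof of the first assertion is essentially the paper's: both encode ``$K$ fits into a $\rho$-ball'' as nonemptiness of $\bigcap_{x\in K}(x+\rho\B)$ and feed this into the defining property of $\him(\B)=k$ to reduce to $(k+1)$-point subsets. Your compactness argument for the supremum being attained is a small addition the paper leaves implicit.

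For the second assertion the paper is even terser than you: it writes only ``Applying Helly's theorem within $\aff(L)$ we may always assume that the set $L\subset K$ with $|L|=k+1$ and $R(L)=R(K)$ is affinely independent,'' and then pads exactly as you do. So the architecture matches, and you are right that this is the delicate point and that the Euclidean projection trick does not transfer because in a general Minkowski space the circumcenter need not lie in $\aff(L)$.

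Your Radon sketch, however, does not close the gap. The claim ``one of the two groups already forces radius $R(K)$'' is not a consequence of affine dependence plus convexity of the norm: for
\[
L=\{(-1,\varepsilon),(1,\varepsilon),(\varepsilon,-1),(\varepsilon,1)\}\subset\E^2,\qquad 0<\varepsilon<1,
\]
the unique Radon partition is $L_1=\{(\pm1,\varepsilon)\}$, $L_2=\{(\varepsilon,\pm1)\}$, and one computes $R(L_1)=R(L_2)=1<\sqrt{1+\varepsilon^2}=R(L)$. Worse, if $L$ is irredundant then \emph{both} parts of any Radon partition automatically satisfy $R(L_i)<R(L)$, so the contradiction cannot come from the partition alone; ``convexity of the norm together with the tightness of the optimal circumball'' is too vague to supply the missing ingredient. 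The paper's one-line appeal to ``Helly within $\aff(L)$'' is open to the very objection you raise (restricting the balls to $\aff(L)$ only controls the circumradius with centre constrained to that flat), and both proofs in effect defer to \cite{BrK} for the actual argument at this step.
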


\begin{proof}
  Let $R_k(K) := \max\{R(L): L \subset K, |L| \le k+1\}$
  (this is called the \cemph{dred}{$k$-th core radius} of $K$ in \cite{BrK}).
  Surely, for any $L \subset K$ it follows $R(L) \subset R(K)$ and therefore $R_k(K) \le R(K)$.

  Now, by definition of $R_k(K)$ any $L \subset K$ with $|L| \le k+1$ can be covered by a copy of $R_k(K)\B$.
  Hence for all such $L$ it holds $\bigcap_{x\in L} (x - R_k(K)\B) \neq \emptyset$. However, by definition of
  the Helly dimension and the compactness of $K$ this means that
  $\bigcap_{x \in K} (x - R_k(K)\B) \neq \emptyset$ and therefore that $K$ can be covered by a copy of
  $R_k(K)\B$. This means $R_k(K) \ge R(K)$ and altogether it follows $R(K)=R_k(K)$.

  Applying Helly's theorem within $\aff(L)$ we may always assume that the set $L \subset K$ with
  $|L|=k+1$ and $R(L)=R(K)$ is affinely independent. Hence, if $|L| \le k \le \dim(K)$,
  we can complete $L$ to the vertex set of a $k$-simplex.
\end{proof}

In \cite[Corollary 1]{BM} Boltyanski and Martini improved Bohnenblust inequality by showing
the following:\footnote{Observe that the authors used the equivalent notion of minimal dependance.}

\begin{prop} \label{p:BM}
For any Minkowski space $\M^n$ it holds
\begin{equation*}
j(\M^n) \le \frac{\him(\M^n)}{\him(\M^n)+1}.
\end{equation*}
\end{prop}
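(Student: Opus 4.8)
The plan is to bound the circumradius of an arbitrary body by its diameter through a Helly-type covering argument, in which the Helly dimension $k:=\him(\M^n)=\him(\B)$ takes over the role played by the ambient dimension $n$ in Bohnenblust's proof of Proposition \ref{prop:Bohnenblust}. Since $j(\M^n)=\sup_{K\in\CK^n}R(K)/D(K)$, it suffices to show that $R(K)\le\frac{k}{k+1}D(K)$ for every $K\in\CK^n$; equivalently, that $K$ admits a circumscribed ball of radius $\rho:=\frac{k}{k+1}D(K)$, i.e.\ that there is a common centre $c$ with $K\subset c+\rho\B$.

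First I would reformulate the existence of such a centre as a nonempty-intersection statement. Because $\B=-\B$, the condition $\norm[c-x]\le\rho$ is symmetric, so $c\in x+\rho\B$ holds iff $x\in c+\rho\B$; hence finding $c$ with $K\subset c+\rho\B$ is the same as finding a point of $\bigcap_{x\in K}(x+\rho\B)$. The family $\{x+\rho\B:x\in K\}$ consists of translates of the single convex body $\rho\B$, and since dilatation does not affect intersection patterns we have $\him(\rho\B)=\him(\B)=k$. Thus, by the very definition of the Helly dimension, the whole intersection is nonempty as soon as every subfamily of at most $k+1$ of these translates has a point in common (the passage from finite subfamilies to the full, possibly infinite, index set $K$ is covered by the definition, the translates being compact convex bodies).

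It therefore remains to treat $k+1$ translates, i.e.\ to show that any points $x_0,\dots,x_k\in K$ satisfy $\bigcap_{i=0}^k(x_i+\rho\B)\neq\emptyset$. Here I would simply test the centroid $c:=\frac{1}{k+1}\sum_{i=0}^k x_i$: writing $c-x_j=\frac{1}{k+1}\sum_{i\neq j}(x_i-x_j)$ and using $\norm[x_i-x_j]\le D(K)$ for all $i,j$, the triangle inequality yields
\[
\norm[c-x_j]\le\frac{1}{k+1}\sum_{i\neq j}\norm[x_i-x_j]\le\frac{k}{k+1}D(K)=\rho
\]
for every $j$, so that $c\in\bigcap_{i=0}^k(x_i+\rho\B)$. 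Combining this with the Helly step produces a global centre $c$ with $K\subset c+\rho\B$, whence $R(K)\le\rho=\frac{k}{k+1}D(K)$ and, after taking the supremum over $K\in\CK^n$, the asserted bound.

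The computation itself is elementary; the only genuinely delicate point is the correct bookkeeping of the Helly dimension. The improvement over Bohnenblust's $\frac{n}{n+1}$ comes precisely from recognising that the relevant Helly number for translates of $\B$ is $\him(\B)+1\le n+1$ rather than $n+1$, so the main thing to verify is that the definition of $\him(\M^n)=\him(\B)$ applies verbatim to the dilated balls $\rho\B$ and to the (infinite) index set $K$.
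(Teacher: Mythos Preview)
Your argument is correct. The paper does not supply its own proof of Proposition~\ref{p:BM}; it simply cites the result from Boltyanski--Martini \cite{BM} (noting in a footnote that those authors phrase it via the equivalent notion of minimal dependence). So there is no in-paper proof to compare against in detail. Your self-contained argument---reformulating the existence of a circumcentre as the nonemptiness of $\bigcap_{x\in K}(x+\rho\B)$, reducing via the Helly dimension of $\B$ to $(k+1)$-point subfamilies, and then testing the centroid---is exactly the natural adaptation of Bohnenblust's proof with $\him(\B)$ in place of $n$, and it is essentially the standard route to this inequality. The two technical points you single out (that $\him(\rho\B)=\him(\B)$ under dilation, and the passage from finite to infinite index sets via compactness) are handled correctly; the paper's definition of $\him$ is stated for arbitrary index sets $I$, so the latter step is even built in.
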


Applying Proposition \ref{p:BM} one may also obtain that the asymmetry constant and
the Helly dimension of any Minkowski space directly bound each other. Moreover, from applying
Lemma \ref{lem:core} also the equality case can be characterized.

\begin{cor} \label{cor:him}
Let $\M^n$ be a Minkowski space. Then
\[\lceil s(\M^n) \rceil \le \him(\M^n)\]
and
$s(\M^n)=\him(\M^n)$ iff there exists a $\him(\M^n)$-dimensional simplex $S$, \st
$s(S)=s(S^+)=s(S^*)$ for all of its completions $S^*$. Moreover, in that case it holds
 $S=S^+\cap\aff(S)$.
\end{cor}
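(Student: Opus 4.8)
The plan is to read the inequality off the machinery already assembled, dispatch the two directions of the characterisation, and keep the identity $S=S^+\cap\aff(S)$ for last. For the inequality $\lceil s(\M^n)\rceil\le\him(\M^n)$: Theorem \ref{th:s_link} gives $j(\M^n)=s(\M^n)/(s(\M^n)+1)$, and Proposition \ref{p:BM} gives $j(\M^n)\le\him(\M^n)/(\him(\M^n)+1)$; since $t\mapsto t/(t+1)$ is strictly increasing these combine to $s(\M^n)\le\him(\M^n)$, and integrality of $\him(\M^n)$ upgrades this to the ceiling bound. Writing $k:=\him(\M^n)$, the \enquote{if}-direction is then immediate: a $k$-dimensional simplex satisfies $s(S)=k$, so the hypothesis forces $s(S^*)=k$ for every completion $S^*$; as each $S^*$ is complete, $s(\M^n)\ge k$, and with the inequality just shown $s(\M^n)=k$.

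For the \enquote{only if}-direction I would use that the maximum defining $s(\M^n)$ is attained, choosing $K\in\Complete$ with $s(K)=k$, so that $j(K)=k/(k+1)$ by Theorem \ref{th:s_link}. Lemma \ref{lem:core} (with $\dim K=n\ge k$) provides a $k$-simplex $S\subset K$ with $R(S)=R(K)$; then $D(S)\le D(K)$ forces $j(S)\ge j(K)=k/(k+1)$, while $j(S)\le j(\M^n)=k/(k+1)$, so both are equalities and $D(S)=D(K)$. Hence $s(S)=k$, and for any completion $S^*$ the relations $D(S^*)=D(S)$ and $R(S^*)\ge R(S)$ squeezed against $j(S^*)\le j(\M^n)$ give $j(S^*)=k/(k+1)$, whence $s(S^*)=k$ by Theorem \ref{th:s_link}; finally $s(S^+)=s(S^*)=k$ by Lemma \ref{lem:pseudo}(a).

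The crux is the identity $S=S^+\cap\aff(S)$. A short convexity computation shows $S^+\cap\aff(S)=\conv(S\cup((c+r\B)\cap\aff(S)))$ for any circumcentre $c$ of $S$, with $r=D(S)-R(S)$, so it suffices to prove $(c+r\B)\cap\aff(S)\subset S$. I would obtain this by passing to the $k$-dimensional Minkowski space $\M_S$ carried by $\aff(S)$ with unit ball $\B\cap(\aff(S)-\aff(S))$ and showing that $S$ is complete in $\M_S$: completeness yields a common in- and circumcentre $c\in\aff(S)$ with $D(S)=r_{\M_S}(S)+R_{\M_S}(S)$ and $r_{\M_S}(S)=r$, so $(c+r\B)\cap\aff(S)$ is exactly the inball of $S$ in $\M_S$ and lies in $S$, collapsing the convexity identity to $S^+\cap\aff(S)=S$. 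Completeness of $S$ in $\M_S$ in turn follows from $j_{\M_S}(S)=k/(k+1)=j(\M_S)$: $S$ is then a Jung maximiser in a $k$-dimensional space, hence a completion of an extremal simplex by Theorem \ref{th:s_link} applied to $\M_S$, and completions are complete.

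The main obstacle is the single input $R_{\M_S}(S)=R(S)$ feeding the last step, i.e.\ that the ambient circumradius of $S$ is realised by a centre lying in $\aff(S)$. One always has $R_{\M_S}(S)\ge R(S)$, and the reverse inequality fails for a generic lower-dimensional body, so it must be extracted from the extremal structure: $S$ arises from the $k+1$ points realising the core radius $R_k(K)=R(K)$ of Lemma \ref{lem:core}, and because $\him(\M^n)=k$ this optimal configuration already determines the circumball of $K$. Turning \enquote{the circumball is pinned by the $k+1$ core vertices} into \enquote{a circumcentre lies in their affine hull $\aff(S)$} is where the real work sits; once it is in place, the remainder is bookkeeping with Lemma \ref{lem:DRrs} and the radius chain of Proposition \ref{prop:chain}.
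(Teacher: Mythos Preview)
Your treatment of the inequality and of both directions of the \enquote{iff} is essentially the paper's: combine Theorem \ref{th:s_link} with Proposition \ref{p:BM} through the monotonicity of $t\mapsto t/(t+1)$, pick $K\in\Complete$ realising $s(\M^n)$, extract a $k$-simplex $S\subset K$ via Lemma \ref{lem:core}, and squeeze $j(S)$ and $j(S^*)$ against $j(\M^n)$. Your derivation of $s(S^+)=s(S^*)$ from Lemma \ref{lem:pseudo}(a) is also in the same spirit.

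Where you diverge is in the identity $S=S^+\cap\aff(S)$, and here there are two issues. First, the obstacle you flag is not one: $R_{\M_S}(S)=R(S)$ falls out immediately, because $D_{\M_S}(S)=D(S)$ and $R_{\M_S}(S)\ge R(S)$ give $j_{\M_S}(S)\ge k/(k+1)$, while Bohnenblust in the $k$-dimensional space $\M_S$ forces $j_{\M_S}(S)\le k/(k+1)$. Second, the step you do not flag is the real gap: you invoke Theorem \ref{th:s_link} to conclude that the Jung maximiser $S$ in $\M_S$ is a completion (hence complete), but the characterisation in Theorem \ref{th:s_link} is stated for $K\in\Complete$; using it to \emph{deduce} completeness is circular. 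Without completeness of $S$ in $\M_S$ you do not get $r_{\M_S}(S)=D(S)-R(S)$, and the inclusion $(c+r\B)\cap\aff(S)\subset S$ remains unproved.

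The paper avoids the subspace detour entirely. It first observes that $s(S^+)\le s(S)$ holds in general (from $c+(D(S)-R(S))\B\subset S^+\subset c+R(S)\B$ together with Proposition \ref{prop:chain}), so in the equality case $s(S^+)=s(S)=k$; it then argues that $s(S^+\cap\aff(S))=k=\dim(S)$, which forces $S^+\cap\aff(S)$ to be a $k$-simplex, and concludes $S^+\cap\aff(S)=S$ using $R(S)=R(S^+)$. This asymmetry-based route is shorter and does not require establishing completeness of $S$ in any auxiliary space.
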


\begin{proof}
Since the function $f(x)=x/(x+1)$ is increasing whenever $x>0$, the claimed inequality
follows directly from combining Proposition \ref{p:BM} with Theorem \ref{th:s_link}.

Considering the equality case,
there exists some $K \in \Complete$ with $s(K)=s(\M^n)$ and it follows from Lemma \ref{lem:core}
that there exists a $\him(\M^n)$-dimensional simplex $S \subset K$, \st $R(S)=R(K)$. Now, $s(K)=s(\M^n)$
implies $j(K)=j(\M^n)$.
Similar to the proof of Lemma \ref{l:JungComplete}, it follows that $j(S)=j(K)$, $D(K)=D(S)$, and
therefore that $K$ is a completion of $S$.
Altogether, $s(S)=s(K)$ is equivalent to $s(\M^n)=\him(\M^n)$.
Finally, since surely $S \cup (D(S)-R(S))\B \subset -s(S)(S \cup(D(S)-R(S))\B)$ in general it holds
$s(S^+) \le s(S)$. From $s(S^+)=s(S)$ we then obtain that $s(S^+\cap\aff(S))=s(S) = \dim(S)$
and since $R(S)=R(S^+)$ that $S^+\cap\aff(S)=S$.
\end{proof}

\begin{example}
Considering \eg $\E^n$, the inequality in \ref{cor:him} can be strict.

On the other hand, for any given $k \in [n]$ the inequality is sharp with
$s(\M^n)=\him(\M^n)=k$, for instance, if we take $K = S^k \times [0,1]^{n-k}$, where $S^k$ denotes a
$k$-dimensional simplex, and $\B=K-K$.

Especially, it is known that $\him(\M^n)=1$ iff $\B$ is a parallelotope (see~\cite{SzN}) and this again is eqivalent to
$\Complete=\{\rho \B + t : \rho \ge 0, t \in \R^n\}$ (follows from \cite{Eg3} together with \cite{So}).
Corollary \ref{cor:him} now says that all this is again equivalent with $s(\M^n)=1$ or $j(\M^n)=1/2$.

In case of $s(\M^n)=n$, \ie~by Corollary \ref{c:simplexCom} that there exists a simplex $S$ with incenter $c$, \st
$S-S \subset D(S)\B \subset(n+1)((S-c)\cap(-(S-c)))$ (and $S=S^+=S^*$), Corollary \ref{cor:him} implies $\him(\M^n)=n$.

Finally, let $\M^3$ be the space whose unit ball is the hexagonal
prism $\B=(T^2-T^2) \times [-1,1]$, where $T^2$ is a 2-simplex.
Then $K=T^2 \times [-1/2,1/2] \in \ConstW$ and since $s(K)=2=\him(\M^3)$ it follows $s(\M^3)=2$.
Now, $S=T^2 \times \{0\} \subset K$, \st $K=S^*$ is the unique completion of $S$ and $s(S)=2$. However,
since $R(S)=\frac23$ and $D(S)=1$ it follows $S^+ = \conv(T^2 \times [-1/3,1/3]) \neq S^*$.
\end{example}

\bigskip

\section{Completeness, Minkowski asymmetry, and Banach-Mazur distance}

We recall that the Banach-Mazur distance between two full-dimensional sets $K,C\in\CK^n$ is defined as
\[
d_{BM}(K,C) := \inf\{\rho>0 : K \subset_t A(C) \subset_t \rho K, \text{ with } A \text{ a
 linear map} \}.
\]
Gr\"unbaum in \cite{Gr} mentioned (but not proved) that $s(K)=\min_{\B\in\CK^n_0}d_{BM}(K,\B)$.
For completeness reasons we present a proof of
this result and point out that this minimum is attained if $\B=K-K$.

\begin{prop}\label{p:BMdistance}
For any $K \in \CK^n$ it holds
\[s(K)=\min_{\B\in\CK^n_0}d_{BM}(K,\B)=d_{BM}(K,K-K).\]
\end{prop}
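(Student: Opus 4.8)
The plan is to prove two inequalities and then chain them: first $s(K)\le d_{BM}(K,C)$ for every centrally symmetric $C\in\CK^n_0$, and second $d_{BM}(K,K-K)\le s(K)$. Since $K-K\in\CK^n_0$, combining these gives
\[
s(K)\ \le\ \min_{C\in\CK^n_0}d_{BM}(K,C)\ \le\ d_{BM}(K,K-K)\ \le\ s(K),
\]
so all three quantities coincide and the minimum is attained at $C=K-K$. Throughout I may assume $K$ is Minkowski centred, i.e.\ $-K\subseteq s(K)K$: both $s$ and $d_{BM}(\,\cdot\,,C)$ are translation invariant (for the latter, a translation of the first argument can be absorbed into the affine map and the outer translation in the definition of $d_{BM}$), while $K-K$ is unchanged under translations of $K$. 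Hence replacing $K$ by the translate $K+\frac{c^*}{s(K)+1}$, where $c^*$ realizes the (attained) infimum in \eqref{eq:DefMinAsy} at $\rho=s(K)$, changes nothing relevant while centring $K$.

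For the lower bound, fix $C\in\CK^n_0$ and pick, using attainment of the defining minimum, a regular affine map $A$ and $x\in\R^n$ with $K\subseteq A(C)\subseteq x+\rho K$, where $\rho=d_{BM}(K,C)$. As $C$ is centrally symmetric, $A(C)$ is symmetric about its centre $p:=A(0)$. Reflecting the inclusion $K\subseteq A(C)$ through $p$ (via $y\mapsto 2p-y$) gives $2p-K\subseteq A(C)\subseteq x+\rho K$, whence $-K\subseteq(x-2p)+\rho K$. By \eqref{eq:DefMinAsy} this means $s(K)\le\rho=d_{BM}(K,C)$, and taking the minimum over $C$ proves $s(K)\le\min_{C\in\CK^n_0}d_{BM}(K,C)$.

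For the upper bound I would exploit centredness together with the elementary identity $aK+bK=(a+b)K$, valid for convex $K$ and $a,b\ge 0$. From $-K\subseteq s(K)K$ one also obtains $\frac{1}{s(K)}K\subseteq -K$, so that
\[
\frac{s(K)+1}{s(K)}\,K\ =\ K+\tfrac{1}{s(K)}K\ \subseteq\ K+(-K)\ =\ K-K\ \subseteq\ K+s(K)K\ =\ (s(K)+1)K.
\]
Scaling by $\frac{s(K)}{s(K)+1}$ turns this into $K\subseteq\frac{s(K)}{s(K)+1}(K-K)\subseteq s(K)K$, so the linear map $A(y)=\frac{s(K)}{s(K)+1}y$ witnesses $d_{BM}(K,K-K)\le s(K)$.

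The translation reduction and the difference-body sandwich are routine; the conceptual heart is the reflection step in the lower bound, which converts the central symmetry of the interposed body $A(C)$ into the inclusion of $-K$ in a translate-dilate of $K$. I do not expect a genuine obstacle here, so the main care is the bookkeeping of translations, ensuring that the centring of $K$ used in the upper bound is consistent with the translation invariance of $d_{BM}$ and $s$.
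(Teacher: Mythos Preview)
Your proof is correct and follows essentially the same route as the paper's: the lower bound via the central symmetry of the interposed body (the paper writes the contrapositive, assuming $\B\subseteq K\subseteq t\B$ and deducing $K\subseteq -tK$), and the upper bound via the difference-body sandwich $\frac{1}{1+s(K)}(K-K)\subset K\subset\frac{s(K)}{1+s(K)}(K-K)$. Your treatment of the translations in the lower bound is more explicit than the paper's, which simply absorbs the affine map and the outer translation into the symmetric body without comment, but the substance is the same.
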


\begin{proof}
Let $\B\in\CK^n_0$. If $\B\subseteq K\subseteq t\B$ with $0<t<s(K)$, then
$K\subseteq t\B=-t\B\subseteq -tK$, a contradiction with the
definition of $s(K)$. Thus $s(K)\leq\min_{\B\in\CK^n_0}d_{BM}(K,\B)$.

Conversely, since $K\subset -s(K)K$ it follows
\[
\frac{1}{s(K)+1}(K-K) \subset K\subset \frac{s(K)}{s(K)+1}(K-K)
\]
and thus $d_{BM}(K,K-K)\leq\frac{s(K)}{s(K)+1}(s(K)+1)=s(K)$, which finishes the proof.
\end{proof}

The following result is taken from \cite[Theorem 2]{GJ}. Even so it is a direct corollary of
Proposition \ref{p:BMdistance}, it gives a more accurate
description of the Banach-Mazur distance
when restricted to completions in $\E^n$.

\begin{prop}\label{cor:GJdistance}
Let $K\in\ConstW$ in $\E^n$. Then $s(K)=d_{BM}(K,\B_2)$.
\end{prop}

Now we state a corollary which generalizes Proposition \ref{cor:GJdistance}
to arbitrary Minkowski space (and sharpens Proposition \ref{p:BMdistance}).

\begin{cor}\label{cor:BMcompleteDistance}
Let $\M^n$ be a Minkowski space and $K$ pseudo-complete. Then $s(K)=d_{BM}(K,\B)$.
\end{cor}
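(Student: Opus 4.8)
The plan is to prove the two inequalities $d_{BM}(K,\B)\ge s(K)$ and $d_{BM}(K,\B)\le s(K)$ separately; only the second will use completeness. The first is immediate: since $\B$ is the unit ball of a norm it is centrally symmetric, i.e.\ $\B\in\CK^n_0$, so Proposition \ref{p:BMdistance} already gives
\[ d_{BM}(K,\B)\ge\min_{\B'\in\CK^n_0}d_{BM}(K,\B')=s(K) \]
with no hypothesis on $K$ at all. Hence all the content lies in the reverse inequality, and this is precisely where the assumption $K\in\Complete$ must be exploited.

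For the upper bound I would use the concentric in- and circumball structure of complete bodies. By the proof of Theorem \ref{th:s_link} (combining \cite{Eg3,Sall} via \cite{MSch}) together with Lemma \ref{lem:DRrs}, any $K\in\Complete$ admits a common in- and circumcenter $c$ with $c+r(K)\B\subset K\subset c+R(K)\B$ and $s(K)=R(K)/r(K)$. I would then exhibit an explicit competitor in the definition of $d_{BM}(K,\B)$: let $A$ be the regular affine map $y\mapsto c+R(K)y$, so that $A(\B)=c+R(K)\B\supset K$. This already realizes the left-hand containment $K\subset A(\B)$ required by the definition.

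It remains to produce a translation vector $x$ with $A(\B)\subset x+s(K)K$. Using $c+r(K)\B\subset K$ and the identity $s(K)r(K)=R(K)$, I would write
\[ s(K)K\supset s(K)\bigl(c+r(K)\B\bigr)=s(K)c+R(K)\B, \]
and then take $x=(1-s(K))c$, so that $x+s(K)c+R(K)\B=c+R(K)\B=A(\B)$. This gives $A(\B)\subset x+s(K)K$, hence $K\subset A(\B)\subset x+s(K)K$, so $d_{BM}(K,\B)\le s(K)$. Combined with the first inequality this yields the claim $s(K)=d_{BM}(K,\B)$.

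I expect the only delicate point to be bookkeeping: getting the orientation of the two containments in the definition of $d_{BM}$ correct and checking that the single choice $x=(1-s(K))c$ simultaneously aligns the two translated copies of $R(K)\B$; everything else collapses to the relation $s(K)r(K)=R(K)$ valid for complete bodies. It is worth recording why completeness is genuinely needed rather than just Proposition \ref{p:BMdistance}: when $K\in\ConstW$ the difference body $K-K$ is a multiple of $\B$ and the equality is immediate, but a complete body need not be of constant width, so $K-K$ need not be homothetic to $\B$; the concentric-ball argument is exactly what forces the minimizing symmetric body to be attainable at $\B$ itself.
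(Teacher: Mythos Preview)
Your proof is correct and follows essentially the same approach as the paper: both use Proposition~\ref{p:BMdistance} for the lower bound and the concentric in- and circumball structure of complete bodies (via \cite{MSch}) together with $s(K)=R(K)/r(K)$ for the upper bound. The only difference is cosmetic --- you spell out the explicit affine map $A(y)=c+R(K)y$ and translation $x=(1-s(K))c$ witnessing $d_{BM}(K,\B)\le R(K)/r(K)$, whereas the paper states this inequality directly from the chain $c+r(K)\B\subset K\subset c+R(K)\B$.
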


\begin{proof}
If $K$ is pseudo-complete, we know from Lemma \ref{lem:DRrs}
that it has concentric in- and circumball and from Lemma \ref{lem:pseudo2} that
$r(K)s(K)=R(K)$.
Now, using Proposition \ref{p:BMdistance} we obtain
$d_{BM}(K,\B) \le \frac{R(K)}{r(K)} = s(K) = \min_{\B\in\CK^n_0}d_{BM}(K,\B)$ and therefore
$s(K)=d_{BM}(K,\B)$, as required.
\end{proof}

Similarly as we have done in Remark \ref{rem:equal} for Theorem \ref{th:s_link},
we should observe that the above corollary shows that the distance of any (pseudo-)complete set and
the unit ball of the space does not depend on which unit ball is chosen, as long as the set stays (pseudo-)complete.

\bigskip

Gr\"unbaum in \cite{Gr} pointed out that the \enquote{supermaximality property} (we
changed the original \enquote{superminimality} because it matches better with our
definition of asymmetry) is a very natural property which should be
true for a nice asymmetry measure: (A) an asymmetry $\overline{s}$
satisfies the \cemph{dred}{supermaximality property} if
$\overline{s}(K+L)\leq\max\{\overline{s}(K),\overline{s}(L)\}$,
$K,L\in\CK^n$.
The asymmetry of Minkowski, \eg, satisfies the maximality property.
This property characterizes simplices to be the most asymmetric sets for any
asymmetry measure in the plane (\cf~\cite{Gr}), but if this is true in higher dimension is still unknown.

Moreover, since simplices may be (almost) subdimensional and may even converge towards a line segment,
it is not incontrovertible
if this is a \enquote{very natural property}. As already mentioned above one may
also interpret $j$ as an asymmetry measure and argue that the sets with  $j(K)=j(\M^n)$
are somehow most asymmetric and the same can be done with the width-inradius ratio
or similar coefficients.

As a second property possibly to be fulfilled by a reasonable
asymmetry measure Gr\"unbaum suggested the equality case of the
supermaximality property: (B) if
$\overline{s}(K+L)=\max\{\overline{s}(K),\overline{s}(L)\}$ then
$K=-K$, $L=-L$ or $K$ and $L$ are homothetics of each other.
However, he did not even clarify if (B) is true for the Minkowski
asymmetry, his favourite asymmetry measure. Maybe that was the
reason that he also considered the following condition: (B') if
$K\in\CK^n$ and $\overline{s}$ fulfills the supermaximality, then
$\overline{s}(K)=n$ iff $K$ is an $n$-simplex. Surely, this
condition is fulfilled by the Minkowsky asymmetry. Moreover, (B')
implies (B) when restricted to
$\overline{s}(K+L)=\max\{\overline{s}(K),\overline{s}(L)\}=n$. In
fact, assuming \Wlog~that $\overline{s}(K+L)=\overline{s}(K)=n$, it
follows from (B') that $K$ and $K+L$ are simplices and from the
indecomposability of simplices that $K$ and $L$ must be even
homothetic.

We observe that property (B) is false for the Minkowski asymmetry:

\begin{rem}
  For any $n \in \N$ consider
  the  $n$-dimensional regular simplex $T$, any Minkowski space $\M^n$ in which $T^+$
  is not complete (\eg $\E^n$), as well as a completion $T^*$ of $T$. Since
  $s(T^+)=s(T^*)$
  and \[R(T^++T^*)+r(T^++T^*)=R(T^+)+r(T^+)+R(T^*)+r(T^*)=D(T^+)+D(T^*)=D(T^++T^*),\]
  Theorem \ref{thm:rD}
  implies that $s(T^++T^*)=R(T^++T^*)/r(T^++T^*)=s(T^+)=s(T^*)$,
  but obviously neither are $T^+,T^*$ symmetric nor homothetics of each other, which contradicts
  property (B).

  With the same arguments one can also show that property (B) does not even hold for $n\ge 3$ when
  we restrict to $\ConstW$ choosing instead of $T^+$ and $T^*$ two different completions
  of $T$ (such as the two 3-dimensional Meissner bodies in $\E^3$ or their bodies of evolution in higher dimensions).
  In 2-space, the Reuleaux triangle is the unique completions of $T$, but one may easily do
  the counterproof as above with two less asymmetric bodies of constant width.
\end{rem}




\bibliographystyle{amsplain}

\end{document}